\newtheorem{theorem}{Theorem}
\newtheorem{condition}[theorem]{Condition}
\newtheorem{corollary}[theorem]{Corollary}
\newtheorem{lemma}[theorem]{Lemma}
\newtheorem{proposition}[theorem]{Proposition}
\newenvironment{proof}[1][Proof]{\noindent\textbf{#1.} }{\ \rule{0.5em}{0.5em}}
\DeclareMathOperator{\Span}{Span}
\begin{document}

\title{A concentration inequality for the excess risk in least-squares
regression with random design and heteroscedastic noise}
\author{Adrien Saumard \\
%EndAName
Crest-Ensai, Universit\'{e} Bretagne Loire}
\maketitle

\begin{abstract}
We prove a new and general concentration inequality for the excess risk in
least-squares regression with random design and heteroscedastic noise. No
specific structure is required on the model, except the existence of a
suitable function that controls the local suprema of the empirical process.
So far, only the case of linear contrast estimation was tackled in the
literature with this level of generality on the model. We solve here the
case of a quadratic contrast, by separating the behavior of a linearized
empirical process and the empirical process driven by the squares of
functions of models.

\textit{keywords}: regression, least-squares, excess risk, empirical
process, concentration inequality, margin relation.

\textit{AMS2000}: 62G08, 62J02, 60E15.
\end{abstract}

\section{Introduction}

The excess risk of a M-estimator is a fundamental quantity of the theory of
statistical learning. Consequently, a general theory of rates of convergence
as been developed in the nineties and early 2000 (%
%TCIMACRO{%
%\TeXButton{\cite{Massart:07, Koltchinskii:11}}{\cite{Massart:07, Koltchinskii:11}}}%
%BeginExpansion
\cite{Massart:07, Koltchinskii:11}%
%EndExpansion
). However, it has been recently identified that some theoretical
descriptions of learning procedures need finer controls than those brought
by the classical upper bounds of the excess risk. In this case, the
derivation of \textit{concentration inequalities for the excess risk} is a
new and exiting axis of research, of particular importance for obtaining
satisfying oracle inequalities in various contexts, especially linked to
high dimension.

In the field of model selection, it has been indeed remarked that such
concentration inequalities allow to discuss the non-asymptotic optimality of
model selection procedures (%
%TCIMACRO{%
%\TeXButton{\cite{BirMas:07, ArlotMassart:09, Ler:11, saum:12}}{\cite{BirMas:07, ArlotMassart:09, Ler:11, saum:12}}}%
%BeginExpansion
\cite{BirMas:07, ArlotMassart:09, Ler:11, saum:12}%
%EndExpansion
). More precisely, concentration inequalities for the excess risk and for
the excess empirical risk (\cite{BouMas:10}) are central tools to access the
optimal constants in the oracle inequalities describing the model selection
accuracy. Such results have put to evidence the optimality of the so-called
slope heuristics (%
%TCIMACRO{%
%\TeXButton{\cite{BirMas:07, BauMauMich:12}}{\cite{BirMas:07, BauMauMich:12}}}%
%BeginExpansion
\cite{BirMas:07, BauMauMich:12}%
%EndExpansion
) and more generally of selection procedures based on the estimation of the
minimal penalty (\cite{Arl_Bac:2009}), in statistical frameworks linked to
regularized quadratic estimation. Under similar assumptions, it is also
possible to discuss optimality of resampling and cross-validation type
procedures (%
%TCIMACRO{%
%\TeXButton{\cite{Arl:2008a, Arl_Ler:2012:penVF:JMLR, celisse2014, navarro2015slope}}{\cite{Arl:2008a, Arl_Ler:2012:penVF:JMLR, celisse2014, navarro2015slope}}}%
%BeginExpansion
\cite{Arl:2008a, Arl_Ler:2012:penVF:JMLR, celisse2014, navarro2015slope}%
%EndExpansion
).

In high dimension, convex methods allow to design and compute efficient
estimators. This is the reason why Chatterjee \cite{chatterjee2014} has
recently focused on the estimation of the mean of a high dimensional
Gaussian vector under convex constraints. By getting a concentration
inequality for the excess risk of a projected least-squares estimator,
Chatterjee \cite{chatterjee2014} has proved the universal admissibility of
this estimator. The concentration inequality has then been sharpened and
extended to the excess risk of estimators minimizing penalized convex
criteria (%
%TCIMACRO{%
%\TeXButton{\cite{MurovandeGeer:15, vandeGeerWain:16}}{\cite{MurovandeGeer:15, vandeGeerWain:16}}}%
%BeginExpansion
\cite{MurovandeGeer:15, vandeGeerWain:16}%
%EndExpansion
).

It is also well known (see for instance \cite{bellec2017towards}) that a
weakness of the theory of regularized estimators in a sparsity context is
that classical oracle inequalities, such as in \cite{BickRitovTsy:09},
describe the performance of estimators with an amount of regularization that
actually depends on the confidence level considered in the oracle
inequality. This does not correspond to any practice with this kind of
estimators, whose regularization parameter is usually fixed using a
cross-validation procedure. Recently, Bellec and Tsybakov \cite%
{bellec2016bounds}, building on \cite{bellec2016slope}, have established
more satisfying oracle inequalities, describing the performance of
regularized estimators such as LASSO, group LASSO and SLOPE with a
confidence level independent of the regularization parameter. In particular,
the oracle inequalities can be integrated. Again, the central tool to obtain
such bound is a concentration inequality for the excess risk estimator at
hand.

In this paper, we extend the technology developed in 
%TCIMACRO{%
%\TeXButton{\cite{chatterjee2014, MurovandeGeer:15, vandeGeerWain:16}}{\cite{chatterjee2014, MurovandeGeer:15, vandeGeerWain:16}} }%
%BeginExpansion
\cite{chatterjee2014, MurovandeGeer:15, vandeGeerWain:16}
%EndExpansion
in order to establish a new concentration inequality for the excess risk in
least-squares regression with random design and heteroscedastic noise. This
is appealing since \cite{chatterjee2014} and \cite{MurovandeGeer:15} cover
only least-squares regression with fixed design and homoscedastic Gaussian
noise, while \cite{vandeGeerWain:16} have to assume that the law of the
design is known in order to perform a "linearized least-squares regression"
(see Section 6.2 of \cite{vandeGeerWain:16}).

Our strategy is as follows. We first remark that the empirical process of
interest splits into two parts: a linear process and a quadratic one. Then
we prove that the linear process achieves a second order margin condition as
defined in \cite{vandeGeerWain:16} and put meaningful conditions on the
quadratic process in order to handle it. Techniques from empirical process
theory such as Talagrand's type concentration inequalities and contraction
arguments are at the core of our approach.

The paper is organized as follows. The regression framework as well as some
properties linked to margin relations are described in Section \ref%
{section_framework}. Then we state our main result in Section \ref%
{section_proofs}. The proofs are deferred to Section \ref{section_proofs}.

\section{Least-squares heteroscedastic regression with random design\label%
{section_framework}}

\subsection{Setting\label{ssection_setting}}

Let $\left\{ \left( X_{i},Y_{i}\right) \right\} _{i=1}^{n}$ be an i.i.d.
sample taking values in $\mathcal{X\times \mathbb{R}},$ where $\mathcal{X}$
is a measurable space - typically a subset of $\mathbb{R}^{p}$. We assume
that the following relation holds%
\begin{equation*}
Y_{i}=g_{\ast }\left( X_{i}\right) +\sigma \left( X_{i}\right) \varepsilon
_{i}\text{ \ \ \ \ for }i=1,...,n\text{ },
\end{equation*}%
where $g_{\ast }$ is the regression function, $\sigma $ is the
heteroscedastic noise level and $\mathbb{E}\left[ \varepsilon _{i}\left\vert
X_{i}\right. \right] =0$, $\mathbb{E}\left[ \varepsilon _{i}^{2}\left\vert
X_{i}\right. \right] =1$.

We take a closed convex model $\mathcal{G}\subset L_{2}\left( P^{X}\right) $%
, where $P^{X}$ is the common distribution of the $X_{i}^{\prime }s$, and
set 
\begin{equation*}
g^{0}=\arg \min_{g\in \mathcal{G}}\left\{ P\left( \gamma \left( g\right)
\right) \right\} \text{ ,}
\end{equation*}%
where $P$ is the common distribution of the pairs $\left( X_{i},Y_{i}\right) 
$ and $\gamma $ is the least-squares contrast, defined by%
\begin{equation*}
\gamma \left( g\right) \left( x,y\right) =\left( y-g\left( x\right) \right)
^{2}\text{ .}
\end{equation*}%
We will also denote $f_{g}:=\gamma \left( g\right) ,$ $f_{\ast }=\gamma
\left( g_{\ast }\right) $, $f^{0}=\gamma \left( g^{0}\right) ,$ $\hat{f}%
=\gamma \left( \hat{g}\right) $ and0 $\mathcal{F}:=\gamma \left( \mathcal{G}%
\right) $. The function $g^{0}$ will be called the \textit{projection} of
the regression function onto the model $\mathcal{G}$. Indeed, if we denote $%
\left\Vert \cdot \right\Vert $ the quadratic norm in $L_{2}\left(
P^{X}\right) $, it holds%
\begin{equation*}
\left\Vert g^{0}-g_{\ast }\right\Vert =\min_{g\in \mathcal{G}}\left\Vert
g-g_{\ast }\right\Vert \text{ .}
\end{equation*}%
We consider the least-squares estimator $\hat{g}$ over $\mathcal{G}$,
defined to be%
\begin{equation*}
\hat{g}\in \arg \min_{g\in \mathcal{G}}\left\{ P_{n}\left( \gamma \left(
g\right) \right) \right\} \text{ ,}
\end{equation*}%
where $P_{n}:=1/n\sum_{i=1}^{n}\delta _{\left( X_{i},Y_{i}\right) }$ is the
empirical measure associated to the sample.

We want to assess the concentration of the quantity $P\left( \hat{f}%
-f^{0}\right) =P\left( \gamma \left( \hat{g}\right) -\gamma \left(
g^{0}\right) \right) $, called the \textit{excess risk} of the least-squares
estimator on $\mathcal{G}$, around a single deterministic point. To this
end, it is easy to see (\cite{saum:12}, Remark 1, \cite{navarro2015slope},
Section 5.2 or van de Geer and Wainwright \cite{vandeGeerWain:16}) that the
following representation formula holds for the excess risk on $\mathcal{G}$
in terms of empirical process,%
\begin{equation}
\hat{s}:=\sqrt{P\left( \hat{f}-f^{0}\right) }=\arg \min_{s\geq 0}\left\{
s^{2}-\mathbf{\hat{E}}_{n}\left( s\right) \right\} \text{ ,}
\label{def_s_hat}
\end{equation}%
where 
\begin{equation*}
\mathbf{\hat{E}}_{n}\left( s\right) =\max_{f\in \mathcal{F}_{s}}\left\{
\left( P_{n}-P\right) \left( f^{0}-f\right) \right\} \text{ ,}
\end{equation*}%
with $\mathcal{F}_{s}=\left\{ f\in \mathcal{F};P\left( f-f^{0}\right) \leq
s^{2}\right\} $. It is shown in \cite{vandeGeerWain:16} for various settings
that include linearized regression that the quantity $\hat{s}$ actually
concentrates around the following point,%
\begin{equation}
s_{0}:=\arg \min_{s\geq 0}\left\{ s^{2}-\mathbf{E}\left( s\right) \right\} 
\text{, \ \ }\mathbf{E}\left( s\right) :=\mathbb{E}\left[ \mathbf{\hat{E}}%
_{n}\left( s\right) \right] \text{ .}  \label{def_s0}
\end{equation}

\subsection{On a margin-like relation pointed on the projection of the
regression function\label{ssection_margin_like}}

In order to prove concentration inequalities for the excess risk on $%
\mathcal{G}$, we will need to check the following margin-like relation, also
called "quadratic curvature condition" in \cite{vandeGeerWain:16}: there
exists a constant $C>0$ such that%
\begin{equation}
P\left( f-f^{0}\right) \geq \frac{\sigma ^{2}\left( f-f^{0}\right) }{C^{2}}%
\text{ for all }f\in \mathcal{F}\text{ ,}  \label{quad_curv}
\end{equation}%
where%
\begin{equation*}
\sigma ^{2}\left( f\right) :=\mathbb{E}f^{2}\left( X_{1}\right) -\left( 
\mathbb{E}f\left( X_{1}\right) \right) ^{2}
\end{equation*}%
is the variance of $f$ with respect to $P^{X}$.

A very classical relation in statistical learning, called margin relation,
consists in assuming that (\ref{quad_curv}) holds with $f^{0}$ is replaced
by the image of the target $f_{\ast }=\gamma \left( g_{\ast }\right) $. Such
relation is satisfied in least-squares regression whenever the response
variable $Y$ is uniformly bounded. Here we do not assume that $f_{\ast }$
belongs to $\mathcal{F}$ thus $f^{0}$ may be different from $f_{\ast }$.

\begin{condition}
\label{cond_reg_bounded}There exists $A_{1}>0$ such that $\left\vert
Y\right\vert \leq A_{1}$ $a.s.$
\end{condition}

From Condition \ref{cond_reg_bounded} we deduce that $\left\Vert g_{\ast
}\right\Vert _{\infty }\leq A_{1}$ and $\left\Vert \sigma \right\Vert
_{\infty }\leq 2A_{1}$.

\begin{condition}
\label{cond_model_bounded}There exists $A_{2}>0$ such that%
\begin{equation*}
\sup_{g\in \mathcal{G}}\left\Vert g\right\Vert _{\infty }\leq A_{2}<\infty 
\text{ .}
\end{equation*}
\end{condition}

From Conditions \ref{cond_reg_bounded} and \ref{cond_model_bounded}, we
deduce that the image model $\mathcal{F}$ is also uniformly bounded: there
exists $K>0$ such that 
\begin{equation*}
\sup_{f\in \mathcal{F}}\left\Vert f-f^{0}\right\Vert _{\infty }\leq K<\infty 
\text{ .}
\end{equation*}%
More precisely, $K=2\left( A_{1}+A_{2}\right) $ is convenient.

The following proposition shows that relation (\ref{quad_curv}) is satisfied
in our regression setting whenever the response variable is bounded and the
model $\mathcal{G}$ is convex and uniformly bounded. It can also be found in 
\cite{lecue:tel-00654100}, Proposition 1.3.3. 

\begin{proposition}
\label{prop_quad_curv}If the model $\mathcal{G}$ is convex and Conditions %
\ref{cond_reg_bounded} and \ref{cond_model_bounded} hold, then there exists
a constant $C>0$ such that%
\begin{equation}
P\left( f-f^{0}\right) \geq \frac{\sigma ^{2}\left( f-f^{0}\right) }{C^{2}}%
\text{ for all }f\in \mathcal{F}\text{ ,}  \label{ineq_margin_pointed}
\end{equation}%
Furthermore, $C=2\left( A_{1}+A_{2}\right) $ is convenient.
\end{proposition}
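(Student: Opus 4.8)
The plan is to reduce everything to the single function $h:=g-g^{0}\in L_{2}\left( P^{X}\right) $ and to use the convexity of $\mathcal{G}$ only through the variational characterization of the projection $g^{0}$. First I would expand the contrast difference pointwise. Writing $y-g\left( x\right) =\left( y-g^{0}\left( x\right) \right) -h\left( x\right) $, a direct computation gives
\begin{equation*}
f_{g}-f^{0}=\left( y-g\right) ^{2}-\left( y-g^{0}\right) ^{2}=h^{2}-2h\left( y-g^{0}\right) =\left( g^{0}-g\right) \left( 2y-g-g^{0}\right) \text{ .}
\end{equation*}
Both factored forms will be used: the first to compute the expectation, the second to bound the second moment.

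For the lower bound on $P\left( f_{g}-f^{0}\right) $, I would take expectations in the first form. Since $Y-g^{0}\left( X\right) =\left( g_{\ast }-g^{0}\right) \left( X\right) +\sigma \left( X\right) \varepsilon $ and $\mathbb{E}\left[ \varepsilon \left\vert X\right. \right] =0$, the noise term vanishes and
\begin{equation*}
P\left( f_{g}-f^{0}\right) =\left\Vert h\right\Vert ^{2}-2\left\langle h,g_{\ast }-g^{0}\right\rangle \text{ .}
\end{equation*}
This is the one place where convexity enters: because $g^{0}$ minimizes $g\mapsto P\left( \gamma \left( g\right) \right) =\left\Vert g-g_{\ast }\right\Vert ^{2}+\mathbb{E}\left[ \sigma \left( X\right) ^{2}\right] $ over the closed convex set $\mathcal{G}$, the first-order optimality condition for the projection of $g_{\ast }$ onto $\mathcal{G}$ yields $\left\langle g_{\ast }-g^{0},g-g^{0}\right\rangle \leq 0$, i.e. $\left\langle h,g_{\ast }-g^{0}\right\rangle \leq 0$. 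Hence the cross term is nonnegative and $P\left( f_{g}-f^{0}\right) \geq \left\Vert h\right\Vert ^{2}$.

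For the upper bound on the variance, I would use $\sigma ^{2}\left( f_{g}-f^{0}\right) \leq \mathbb{E}\left[ \left( f_{g}-f^{0}\right) ^{2}\right] $ together with the second factored form. Conditions \ref{cond_reg_bounded} and \ref{cond_model_bounded} give $\left\vert 2y-g-g^{0}\right\vert \leq 2A_{1}+2A_{2}=:K$ almost surely, so that $\left( f_{g}-f^{0}\right) ^{2}\leq K^{2}h^{2}$ pointwise and therefore $\mathbb{E}\left[ \left( f_{g}-f^{0}\right) ^{2}\right] \leq K^{2}\left\Vert h\right\Vert ^{2}$. Combining the two bounds gives $\sigma ^{2}\left( f-f^{0}\right) \leq K^{2}\left\Vert h\right\Vert ^{2}\leq K^{2}P\left( f-f^{0}\right) $, which is the claim with $C=K=2\left( A_{1}+A_{2}\right) $.

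The computations are all elementary; the only substantive point—and the step I would be most careful about—is the use of the projection inequality, which is exactly where the convexity hypothesis is indispensable. Without it the cross term $\left\langle h,g_{\ast }-g^{0}\right\rangle $ need not have a favourable sign and the lower bound $P\left( f_{g}-f^{0}\right) \geq \left\Vert h\right\Vert ^{2}$ could fail.
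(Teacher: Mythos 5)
Your proposal is correct and follows essentially the same route as the paper: the same contrast expansion $f_g-f^0=(g^0-g)(2y-g-g^0)$, the lower bound $P(f_g-f^0)\geq \left\Vert g-g^0\right\Vert^2$ via the projection inequality $\left\langle g_{\ast}-g^{0},g-g^{0}\right\rangle \leq 0$ from convexity, and the variance bound $\sigma^2(f_g-f^0)\leq \mathbb{E}\left[ (f_g-f^0)^2\right] \leq 4(A_1+A_2)^2\left\Vert g-g^0\right\Vert^2$, yielding the same constant $C=2(A_1+A_2)$. The only cosmetic difference is that you justify the projection property of $g^0$ explicitly by writing $P(\gamma(g))=\left\Vert g-g_{\ast}\right\Vert^2+\mathbb{E}\left[ \sigma^2(X)\right]$, a fact the paper takes as given from its setting.
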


The major gain brought by Proposition \ref{prop_quad_curv} over the
classical margin relation is that the bias of the model, that is the
quantity $P\left( f^{0}-f_{\ast }\right) $ that is implicitly contained in
the excess risk appearing in the classical margin relation, is pushed away
from Inequality (\ref{ineq_margin_pointed}). Proposition \ref{prop_quad_curv}
is thus a refinement over the classical notion of margin relation. It is
stated for the least-squares contrast but it is easy to see that it can be
extended to more general situations, where the contrast $\gamma $ is convex
and regular in some sense (see Proposition 2.1, Section 2.2.3 in \cite%
{Saumard:10}). For completeness, the proof of Proposition can be found in
Section \ref{ssection_proof_framework}.

\subsection{Second order quadratic margin condition\label%
{ssection_second_order_margin}}

First notice that the arguments of the empirical process of interest can be
decomposed into a linear and a quadratic part. It holds, for any $f=f_{g}\in 
\mathcal{F}$ and any $\left( x,y\right) \in \mathcal{X\times }\mathbb{R}$,%
\begin{eqnarray}
f_{g}\left( x,y\right) -f^{0}\left( x,y\right) &=&\gamma \left( g\right)
\left( x,y\right) -\gamma \left( g^{0}\right) \left( x,y\right)  \notag \\
&=&\psi \left( x,y\right) \cdot \left( g-g^{0}\right) \left( x\right)
+\left( g-g^{0}\right) ^{2}\left( x\right) \text{ ,}
\label{contrast_expansion}
\end{eqnarray}%
where $\psi \left( x,y\right) =-2\left( y-g^{0}\left( x\right) \right) $.

To this contrast expansion around the projection $g^{0}$ of the regression
function onto $\mathcal{G}$, we can associate two empirical processes, that
we will call respectively the linear and the quadratic empirical process,
and we will be more precisely interested by their local maxima on $\mathcal{G%
}_{s}:=\left\{ g\in \mathcal{G}\text{ };\text{ }\left\Vert
g-g^{0}\right\Vert \leq s\right\} $, $s\geq 0$,%
\begin{equation*}
\mathbf{\hat{E}}_{n,\ell }\left( s\right) =\max_{g\in \mathcal{G}%
_{s}}\left\{ \left( P_{n}-P\right) \left( \psi \cdot \left( g-g^{0}\right)
\right) \right\} \text{ \ \ and \ \ }\mathbf{\hat{E}}_{n,q}\left( s\right)
=\max_{g\in \mathcal{G}_{s}}\left\{ \left( P_{n}-P\right) \left(
g-g^{0}\right) ^{2}\right\} \text{ .}
\end{equation*}%
In what follows, we will not directly show that the excess risk concentrates
around $s_{0}$ defined in (\ref{def_s0}), but rather around a point $\tilde{s%
}_{0}$, defined to be,%
\begin{equation*}
\tilde{s}_{0}=\arg \min_{s\geq 0}\left\{ s^{2}-\mathbf{E}_{\ell }\left(
s\right) \right\} ,\text{ }\mathbf{E}_{\ell }\left( s\right) =\mathbb{E}%
\left[ \mathbf{\hat{E}}_{n,\ell }\left( s\right) \right] \text{ .}
\end{equation*}%
It holds around $\tilde{s}_{0}$ a relation of the type of a second order
margin relation, introduced in \cite{vandeGeerWain:16}, as proved in the
following Lemma, which proof is available in Section \ref%
{ssection_proof_framework}. 

\begin{lemma}
\label{lemma_second_order_lin}For any $s\geq 0$, it holds%
\begin{equation}
s^{2}-\mathbf{E}_{\ell }\left( s\right) -\left[ \tilde{s}_{0}^{2}-\mathbf{E}%
_{\ell }\left( \tilde{s}_{0}\right) \right] \geq \left( s-\tilde{s}%
_{0}\right) ^{2}\text{ ,}  \label{second_order_lin}
\end{equation}%
and also%
\begin{equation}
s^{2}-\mathbf{\hat{E}}_{n,\ell }\left( s\right) -\left[ \tilde{s}_{0}^{2}-%
\mathbf{\hat{E}}_{n,\ell }\left( \tilde{s}_{0}\right) \right] \geq \left( s-%
\tilde{s}_{0}\right) ^{2}\text{ .}  \label{second_order_lin_emp}
\end{equation}
\end{lemma}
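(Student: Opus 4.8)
The plan is to read both inequalities as the statement that the concave, nondecreasing functions $\mathbf{E}_{\ell}$ and $\mathbf{\hat{E}}_{n,\ell}$ lie below an affine function of slope $2\tilde{s}_{0}$, and to recognize the quadratic lower bound $(s-\tilde{s}_{0})^{2}$ as the expression of the $2$-strong convexity of $s\mapsto s^{2}$ combined with a first-order optimality property of the centering point. The first step is therefore to establish concavity and monotonicity of the local linear supremum. Writing $h:=g-g^{0}$, the functional $(P_{n}-P)(\psi\cdot h)$ is linear in $h$, and $\mathcal{G}_{s}$ corresponds to the convex set $\{h\in\mathcal{G}-g^{0}:\ \|h\|\leq s\}$. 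For radii $s_{1},s_{2}$ and $\lambda\in[0,1]$, taking near-maximizers $h_{1},h_{2}$ and using that $\lambda h_{1}+(1-\lambda)h_{2}$ is admissible at radius $\lambda s_{1}+(1-\lambda)s_{2}$ (convexity of $\mathcal{G}$ and the triangle inequality for $\|\cdot\|$), together with additivity of the linear functional, yields concavity of $s\mapsto\mathbf{\hat{E}}_{n,\ell}(s)$ for every realization; monotonicity is immediate from the nesting of the constraint sets. Taking expectations preserves both, so $\mathbf{E}_{\ell}$ is concave and nondecreasing and $F(s):=s^{2}-\mathbf{E}_{\ell}(s)$ is $2$-strongly convex with unique minimizer $\tilde{s}_{0}$.

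For the deterministic inequality (\ref{second_order_lin}), subtracting $(s-\tilde{s}_{0})^{2}$ and simplifying shows it is equivalent to $\mathbf{E}_{\ell}(s)-\mathbf{E}_{\ell}(\tilde{s}_{0})\leq 2\tilde{s}_{0}(s-\tilde{s}_{0})$, that is, to the affine map of slope $2\tilde{s}_{0}$ through $(\tilde{s}_{0},\mathbf{E}_{\ell}(\tilde{s}_{0}))$ being a supporting line of the concave function $\mathbf{E}_{\ell}$ at $\tilde{s}_{0}$. I would prove this without any differentiability assumption by a convex-combination argument: for $s\geq 0$ and $\lambda\in(0,1]$ set $s_{\lambda}=\tilde{s}_{0}+\lambda(s-\tilde{s}_{0})$. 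Concavity gives $\mathbf{E}_{\ell}(s_{\lambda})\geq(1-\lambda)\mathbf{E}_{\ell}(\tilde{s}_{0})+\lambda\mathbf{E}_{\ell}(s)$, while minimality of $\tilde{s}_{0}$ gives $\mathbf{E}_{\ell}(s_{\lambda})-\mathbf{E}_{\ell}(\tilde{s}_{0})\leq s_{\lambda}^{2}-\tilde{s}_{0}^{2}$. Combining these with the expansion
\[
s_{\lambda}^{2}-\tilde{s}_{0}^{2}=2\lambda\tilde{s}_{0}(s-\tilde{s}_{0})+\lambda^{2}(s-\tilde{s}_{0})^{2},
\]
then dividing by $\lambda$ and letting $\lambda\to 0^{+}$, yields exactly $\mathbf{E}_{\ell}(s)-\mathbf{E}_{\ell}(\tilde{s}_{0})\leq 2\tilde{s}_{0}(s-\tilde{s}_{0})$, which is (\ref{second_order_lin}) after rearrangement.

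The same algebraic rearrangement reduces the empirical statement (\ref{second_order_lin_emp}) to the pathwise inequality $\mathbf{\hat{E}}_{n,\ell}(s)-\mathbf{\hat{E}}_{n,\ell}(\tilde{s}_{0})\leq 2\tilde{s}_{0}(s-\tilde{s}_{0})$, i.e. to $2\tilde{s}_{0}$ being a supporting slope of the random concave curve $\mathbf{\hat{E}}_{n,\ell}$ at $\tilde{s}_{0}$. This is the step I expect to be the main obstacle: the centering point $\tilde{s}_{0}$ is determined by the \emph{expected} functional $\mathbf{E}_{\ell}$, whereas the slope now lives on the \emph{random} curve $\mathbf{\hat{E}}_{n,\ell}$, so the optimality argument of the previous paragraph does not transfer verbatim. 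I would attempt to close this gap by exploiting the homogeneity structure of the linear process: since $(P_{n}-P)(\psi\cdot h)$ is linear in $h$ and the maximization runs over a $\|\cdot\|$-ball of radius $s$, the map $s\mapsto\mathbf{\hat{E}}_{n,\ell}(s)$ is positively homogeneous of degree one in the regime where the conic structure of $\mathcal{G}-g^{0}$ at $g^{0}$ governs the optimum, which fixes the supporting slope and allows $\tilde{s}_{0}$ to act as the contact point independently of the realization. Verifying that this contact identity holds for every sample—thereby reconciling the deterministic center with the random curve—is the delicate point, and once it is secured the rearrangement of the first display gives (\ref{second_order_lin_emp}) at once.
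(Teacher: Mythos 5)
Your concavity argument and your proof of the deterministic inequality (\ref{second_order_lin}) are both correct. The concavity step (near-maximizers, convexity of $\mathcal{G}$, triangle inequality for $\left\Vert \cdot \right\Vert$, linearity of $h\mapsto \left( P_{n}-P\right) \left( \psi \cdot h\right)$) is the same as the paper's, and your supporting-line argument --- combining concavity of $\mathbf{E}_{\ell }$ with the minimality of $\tilde{s}_{0}$ for $s^{2}-\mathbf{E}_{\ell }\left( s\right)$ and letting $\lambda \rightarrow 0^{+}$ --- is a clean and fully rigorous route to (\ref{second_order_lin}). It is in fact more careful than the paper's own treatment, which derives (\ref{second_order_lin}) by taking expectations in (\ref{second_order_lin_emp}) and justifies the latter by asserting that strong convexity of $s\mapsto s^{2}-\mathbf{\hat{E}}_{n,\ell }\left( s\right)$ "implies" the quadratic lower bound.

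The gap is the empirical inequality (\ref{second_order_lin_emp}), and you have located it exactly: strong convexity of a functional yields a quadratic lower bound only around the minimizer of that \emph{same} functional, whereas $\tilde{s}_{0}$ minimizes the expected one; equivalently, one would need $2\tilde{s}_{0}$ to be a supergradient of the \emph{random} concave curve $\mathbf{\hat{E}}_{n,\ell }$ at $\tilde{s}_{0}$. However, your proposed repair via $1$-homogeneity cannot work; it actually makes the failure transparent. If $\mathbf{\hat{E}}_{n,\ell }\left( s\right) =Z_{n}s$ near $\tilde{s}_{0}$ with $Z_{n}$ random --- e.g.\ the one-dimensional model $\mathcal{G}=\left\{ g^{0}+th_{0}\,:\,t\in \left[ -1,1\right] \right\}$, $\left\Vert h_{0}\right\Vert =1$, for which $Z_{n}=\left\vert \left( P_{n}-P\right) \left( \psi h_{0}\right) \right\vert$ and $\tilde{s}_{0}=\mathbb{E}\left[ Z_{n}\right] /2$ --- then requiring $\mathbf{\hat{E}}_{n,\ell }\left( s\right) -\mathbf{\hat{E}}_{n,\ell }\left( \tilde{s}_{0}\right) \leq 2\tilde{s}_{0}\left( s-\tilde{s}_{0}\right)$ for all $s\geq 0$ forces $Z_{n}=2\tilde{s}_{0}=\mathbb{E}\left[ Z_{n}\right]$, i.e.\ a nondegenerate random variable would have to equal its expectation almost surely. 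More simply, taking $s=0$ in (\ref{second_order_lin_emp}) demands $\mathbf{\hat{E}}_{n,\ell }\left( \tilde{s}_{0}\right) \geq 2\tilde{s}_{0}^{2}$ almost surely, a deterministic lower bound on a quantity that fluctuates below its mean with positive probability. So the pathwise statement (\ref{second_order_lin_emp}) is false in general, and no argument can secure your "delicate point". You should not take this as a defect of your attempt: the paper's own proof has precisely the hole you identified (it silently treats $\tilde{s}_{0}$ as the minimizer of the empirical functional), and only the expected version (\ref{second_order_lin}) --- the half you did prove --- is what is invoked in the proof of Theorem \ref{theorem_gene}.
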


The fundamental difference with the second order margin relation stated in 
\cite{vandeGeerWain:16} is that we require in Lemma \ref%
{lemma_second_order_lin} conditions on the linear part of empirical process
and not on the empirical process of origin, that takes in arguments
contrasted functions. Indeed, it seems that for the latter empirical
process, the second order margin relation does not hold or is hard to check
in least-squares regression in general. This difficulty indeed forced van de
geer and Wainwright \cite{vandeGeerWain:16}, Section 6.2, to work in a
"linearized least-squares regression" context, under the quite severe
restriction that the distribution of design is known from the statistitian.
On contrary, our main result stated in Section \ref{section_main_result}\
below is stated for a general regression situation, where the distribution
of the design is unknown and the noise level is heteroscedastic.

\section{Main result\label{section_main_result}}

Before stating our new concentration inequality, we describe the required
assumptions. In order to state the next condition, let us denote 
\begin{equation*}
\mathbf{\hat{E}}_{n,1}\left( s\right) =\max_{g\in \mathcal{G}_{s}}\left\{
\left( P_{n}-P\right) \left( g-g^{0}\right) \right\} \text{ , \ }\mathbf{E}%
_{1}\left( s\right) =\mathbb{E}\left[ \mathbf{\hat{E}}_{n,1}\left( s\right) %
\right] \text{ .}
\end{equation*}

\begin{condition}
\label{cond_expected_bound}There is a sequence $m_{n}$ and a strictly
increasing function $\mathcal{J}_{1}$ such that the function 
\begin{equation*}
\Phi _{\mathcal{J}_{1}}\left( u\right) :=\left[ \mathcal{J}_{1}^{-1}\left(
u\right) \right] ^{2},\text{ \ }u>0,
\end{equation*}%
is strictly convex and such that%
\begin{equation*}
\mathbf{E}_{1}\left( s\right) \leq \frac{\mathcal{J}_{1}\left( s\right) }{%
m_{n}},\text{ \ }s\geq 0\text{.}
\end{equation*}
\end{condition}

\begin{condition}
\label{cond_envelope}Take $K>0$. For any $s>0$, there exists a constant $%
D\left( s\right) \in \left( 0,K\right] $ such that%
\begin{equation*}
\sup_{g\in \mathcal{G}_{s}}\left\Vert g-g^{0}\right\Vert _{\infty }\leq
D\left( s\right) \text{ .}
\end{equation*}
\end{condition}

Notice that if Conditions \ref{cond_reg_bounded}, \ref{cond_model_bounded}
and \ref{cond_expected_bound} hold, then by the use of classical
symmetrization and contraction arguments, for all $s\geq 0$,%
\begin{equation*}
\mathbf{E}_{\ell }\left( s\right) \leq C_{A_{1},A_{2}}\frac{\mathcal{J}%
_{1}\left( s\right) }{m_{n}}\text{ and }\mathbf{E}_{q}\left( s\right) \leq
4D\left( s\right) \frac{\mathcal{J}_{1}\left( s\right) }{m_{n}}\text{ ,}
\end{equation*}%
where $C_{A_{1},A_{2}}$ is a positive constant that only depends on $A_{1}$
and $A_{2}$. From now on, we set%
\begin{equation*}
\mathcal{J}:=\max \left\{ C_{A_{1},A_{2}};4\left( K\vee 1\right) \right\} 
\mathcal{J}_{1}\text{ ,}
\end{equation*}%
so that $\max \left\{ \mathbf{E}_{\ell }\left( s\right) ;\mathbf{E}%
_{q}\left( s\right) \right\} \leq \mathcal{J}\left( s\right) /m_{n}$ for any 
$s>0$ and also $\mathbf{E}_{q}\left( s\right) \leq D\left( s\right) \mathcal{%
J}\left( s\right) /m_{n}$.

We are now able to state our main result.

\begin{theorem}
\label{theorem_gene}If%
\begin{equation*}
\mathcal{J}\left( s\right) =A_{\mathcal{J}}s\text{ };\text{ }D\left(
s\right) =A_{\infty }s\text{ };\text{ }m_{n}=\sqrt{n}\text{ \ };\text{ \ }%
\tilde{s}_{0}\asymp \frac{A_{0}}{\sqrt{n}}
\end{equation*}%
then it holds, for any $t\geq 0$,%
\begin{equation}
\mathbb{P}\left( \left\vert \left\Vert \hat{g}-g^{0}\right\Vert -\tilde{s}%
_{0}\right\vert \geq \frac{\sqrt{A_{\mathcal{J}}A_{\infty }}\tilde{s}_{0}}{%
n^{1/4}}\vee c_{0}\left( \sqrt{\frac{t+\ln \left( 1+K\sqrt{n}\right) }{n}}+%
\frac{t+\ln \left( 1+K\sqrt{n}\right) }{n}\right) \right) \leq e^{-t}\text{ .%
}  \label{concen_ineq_gene}
\end{equation}%
Hence, if moreover $\sqrt{\ln n}\ll A_{0}\ll \sqrt{n}$ and $A_{\mathcal{J}%
}A_{\infty }\ll \sqrt{n}$ then,%
\begin{equation*}
\left\vert \frac{\left\Vert \hat{g}-g^{0}\right\Vert -\tilde{s}_{0}}{\tilde{s%
}_{0}}\right\vert =O_{\mathbb{P}}\left( \frac{\sqrt{A_{\mathcal{J}}A_{\infty
}}}{n^{1/4}}\vee \frac{\sqrt{\ln n}}{A_{0}}\right) =o_{\mathbb{P}}\left(
1\right) \text{ .}
\end{equation*}
\end{theorem}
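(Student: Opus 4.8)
The plan is to reduce the concentration of $\|\hat g - g^0\|$ around $\tilde s_0$ to the concentration of the empirical processes $\mathbf{\hat E}_{n,\ell}$ and $\mathbf{\hat E}_{n,q}$ around their means, exploiting the variational characterizations \eqref{def_s_hat} and \eqref{def_s0} together with the second order margin relation of Lemma \ref{lemma_second_order_lin}. Recall that $\hat s = \|\hat g - g^0\|$ minimizes $s \mapsto s^2 - \mathbf{\hat E}_n(s)$, where by the contrast expansion \eqref{contrast_expansion} we have $\mathbf{\hat E}_n(s) = \mathbf{\hat E}_{n,\ell}(s) + \mathbf{\hat E}_{n,q}(s)$ up to the coupling of the maximizer (so strictly $\mathbf{\hat E}_n \le \mathbf{\hat E}_{n,\ell} + \mathbf{\hat E}_{n,q}$ pointwise, which is the direction I need). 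The key deterministic idea is that \eqref{second_order_lin_emp} gives quadratic curvature of the empirical criterion for the linear part, so that for $s$ at distance $\delta$ from $\tilde s_0$ one has $s^2 - \mathbf{\hat E}_{n,\ell}(s) - [\tilde s_0^2 - \mathbf{\hat E}_{n,\ell}(\tilde s_0)] \ge \delta^2$. The event $\{|\hat s - \tilde s_0| \ge \delta\}$ therefore forces the quadratic process plus the fluctuations of the linear process to overcome this $\delta^2$ gap.

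Concretely, I would argue as follows. On $\{|\hat s - \tilde s_0| \ge \delta\}$, optimality of $\hat s$ for $s^2 - \mathbf{\hat E}_n(s)$ gives $\hat s^2 - \mathbf{\hat E}_n(\hat s) \le \tilde s_0^2 - \mathbf{\hat E}_n(\tilde s_0)$, hence
\begin{equation*}
\hat s^2 - \mathbf{\hat E}_{n,\ell}(\hat s) - [\tilde s_0^2 - \mathbf{\hat E}_{n,\ell}(\tilde s_0)] \le \mathbf{\hat E}_{n,q}(\hat s) - \mathbf{\hat E}_{n,q}(\tilde s_0)\,,
\end{equation*}
so that combining with \eqref{second_order_lin_emp} forces $\delta^2 \le \mathbf{\hat E}_{n,q}(\hat s) - \mathbf{\hat E}_{n,q}(\tilde s_0)$. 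The quadratic process must thus be large; its expectation is controlled by $\mathbf{E}_q(s) \le D(s)\mathcal{J}(s)/m_n = A_\infty A_{\mathcal J} s^2/\sqrt n$, which under the scaling $s \asymp \tilde s_0$ is of order $A_\infty A_{\mathcal J}\tilde s_0^2/\sqrt n$. This is precisely the source of the deterministic radius $\sqrt{A_{\mathcal J} A_\infty}\,\tilde s_0/n^{1/4}$ appearing in \eqref{concen_ineq_gene}: choosing $\delta$ equal to (a constant times) this quantity makes the mean contribution of the quadratic process comparable to $\delta^2$, so that only the fluctuations remain to be controlled.

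For the fluctuation terms I would invoke Talagrand's concentration inequality for the suprema of the empirical processes $\mathbf{\hat E}_{n,\ell}(s)$ and $\mathbf{\hat E}_{n,q}(s)$, localized on the slices $\mathcal G_s$, whose envelopes are bounded by $K$ (and, for the quadratic part, by $D(s)^2$) thanks to Conditions \ref{cond_reg_bounded}--\ref{cond_envelope}. The Bernstein-type tail Talagrand gives, with variance proxy controlled by the $L_2$-radius $s^2$ and the sup-norm by $D(s)$, produces exactly the two-regime bound $\sqrt{(t+\ln(1+K\sqrt n))/n} + (t+\ln(1+K\sqrt n))/n$ once the confidence level $t$ is inflated by the $\ln(1+K\sqrt n)$ term. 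That logarithmic term is the price of a \emph{peeling} (slicing) argument over the continuum of radii $s$: I would discretize $s \in [0, K]$ into $O(K\sqrt n)$ geometric slices and take a union bound, so that the effective cardinality contributes $\ln(1+K\sqrt n)$ uniformly. Putting $\delta$ equal to the maximum of the deterministic radius and the Talagrand radius and collecting the pieces yields \eqref{concen_ineq_gene} with a suitable absolute constant $c_0$. The final $o_{\mathbb P}(1)$ statement is then immediate by dividing by $\tilde s_0$ and inserting the asymptotic regimes $\sqrt{\ln n}\ll A_0 \ll \sqrt n$ and $A_{\mathcal J}A_\infty \ll \sqrt n$.

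The main obstacle I anticipate is the uniform (in $s$) control of the quadratic process $\mathbf{\hat E}_{n,q}(\hat s) - \mathbf{\hat E}_{n,q}(\tilde s_0)$: since $\hat s$ is random, one cannot simply evaluate the process at a fixed radius, and the quadratic functional $g \mapsto (g-g^0)^2$ is not Lipschitz in a way that the contraction principle handles directly. The delicate point is to show that the increments of the quadratic process over the interval between $\hat s$ and $\tilde s_0$ remain $o(\delta^2)$ with high probability, which requires the peeling argument above to be executed so that the contraction inequality (costing a factor $4D(s)$ via the envelope of the squares) and Talagrand's inequality interact correctly on each slice. Balancing the slice width against the union-bound cost, while keeping the envelope bound $D(s) = A_\infty s$ active, is where the heteroscedastic and random-design features genuinely complicate the argument relative to \cite{chatterjee2014,vandeGeerWain:16}.
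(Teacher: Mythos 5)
Your scaffolding is exactly the paper's: the variational characterization of $\hat{s}$, the linear/quadratic split of the empirical process, a peeling of $[0,K]$ into $O(K\sqrt{n})$ slices of width $1/\sqrt{n}$ with a union bound producing the $\ln(1+K\sqrt{n})$ inflation, and the calibration of the deterministic radius by balancing $\delta^{2}$ against $\mathbf{E}_{q}(s)\leq D(s)\mathcal{J}(s)/m_{n}=A_{\infty}A_{\mathcal{J}}s^{2}/\sqrt{n}$. However, your central deterministic step has a genuine gap: you apply the \emph{empirical} second order margin relation \eqref{second_order_lin_emp} at $\tilde{s}_{0}$ to conclude that on $\{|\hat{s}-\tilde{s}_{0}|\geq\delta\}$ one has $\delta^{2}\leq\mathbf{\hat{E}}_{n,q}(\hat{s})+\mathbf{\hat{E}}_{n,q}(\tilde{s}_{0})$ (note also that both quadratic suprema must enter with a plus sign, not as a difference), so that the fluctuations of the linear process disappear entirely from the analysis. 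Inequality \eqref{second_order_lin_emp} is not a valid pathwise statement: strong convexity of $s\mapsto s^{2}-\mathbf{\hat{E}}_{n,\ell}(s)$ yields quadratic growth around \emph{its own random minimizer}, whereas $\tilde{s}_{0}$ minimizes the \emph{expected} criterion $s\mapsto s^{2}-\mathbf{E}_{\ell}(s)$, and no first-order optimality condition for the empirical criterion is available at $\tilde{s}_{0}$. Concretely, when $\mathcal{G}_{s}=g^{0}+sB$ for a symmetric convex set $B$ (which happens, e.g., in the setting of Corollary \ref{corollary_linear_model} for $s$ small), one has $\mathbf{\hat{E}}_{n,\ell}(s)=Zs$ with $Z\geq 0$ random and $\tilde{s}_{0}=\mathbb{E}[Z]/2$, and \eqref{second_order_lin_emp} reduces to $(s-\tilde{s}_{0})(\mathbb{E}[Z]-Z)\geq 0$, which fails on one whole side of $\tilde{s}_{0}$ whenever $Z\neq\mathbb{E}[Z]$, i.e.\ generically. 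The paper does state \eqref{second_order_lin_emp} in Lemma \ref{lemma_second_order_lin}, but the proof given there only establishes curvature around the empirical minimizer; the sound statement is the expectation version \eqref{second_order_lin}, and, tellingly, that is the only one the paper's own proof of Theorem \ref{theorem_gene} uses.

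This is not cosmetic: if your reduction were correct, concentration of $\hat{s}$ would be paid for by the quadratic process alone, whereas the linear-process fluctuations are precisely what generate the terms $2Cs\sqrt{t/n}+r_{0}\sqrt{t/n}+Kt/n$ of Lemma \ref{lemma_dev} and hence the $c_{0}(\sqrt{(t+\ln(1+K\sqrt{n}))/n}+(t+\ln(1+K\sqrt{n}))/n)$ part of the radius in \eqref{concen_ineq_gene}. The paper closes the hole by reversing the order of operations: on each slice it first applies the Bousquet and Klein--Rio inequalities (Lemma \ref{lemma_dev}) to the \emph{full} process $\mathbf{\hat{E}}_{n}$ at the fixed radii $j\varepsilon+\delta+\tilde{s}_{0}$ and $\tilde{s}_{0}$, thereby replacing $\mathbf{\hat{E}}_{n}$ by its mean $\mathbf{E}\leq\mathbf{E}_{\ell}+\mathbf{E}_{q}$ up to fluctuation terms whose variance proxy $\sigma_{s}^{2}\leq C^{2}s^{2}$ comes from Proposition \ref{prop_quad_curv}; only then does it invoke the margin relation, in expectation, \eqref{second_order_lin}, on $\mathbf{E}_{\ell}$, together with $\mathbf{E}_{q}(s)\leq D(s)\mathcal{J}(s)/m_{n}$. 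Your closing prose (``the quadratic process plus the fluctuations of the linear process'') points in the right direction, but the formalized step drops the linear fluctuations; patching it as just described, i.e.\ controlling the linear part at the peeled fixed radii and using \eqref{second_order_lin} instead of \eqref{second_order_lin_emp}, lands you essentially verbatim on the paper's proof.
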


Inequality (\ref{concen_ineq_gene}) of Theorem \ref{theorem_gene} is a new
concentration inequality related least-squares regression with random design
and heteroscedastic noise on a convex, uniformly bounded model. In
particular, it extends results of \cite{vandeGeerWain:16} related to
linearized regression, which is a simplified framework for regression, to
the classical and general least-squares framework described in Section \ref%
{ssection_setting}\ above. The proof of Theorem \ref{theorem_gene} is
detailed in Section \ref{ssection_proof_main_result}.

The following corollary provides an generic example entering into the
assumptions of Theorem \ref{theorem_gene}\ and that is related to linear
aggregation \textit{via} empirical risk minimization.

\begin{corollary}
\label{corollary_linear_model}Define $\mathcal{M}=%
%TCIMACRO{\TeXButton{\Span}{\Span}}%
%BeginExpansion
\Span%
%EndExpansion
\left( \varphi _{1},...,\varphi _{D}\right) $ the linear span generated by $%
\left( \varphi _{k}\right) _{k=1}^{D}$, an orthonormal dictionary in $%
L_{2}\left( P^{X}\right) $. Take $\mathcal{G}=B_{\left( \mathcal{M}%
,L_{\infty }\right) }\left( g^{0},1\right) \subset \mathcal{M}$ the unit
ball in sup-norm of $\mathcal{M}$ centered on $g^{0}$, the projection of $%
g_{\ast }$ onto $\mathcal{M}$. Assume that%
\begin{equation}
\sup_{g\in \mathcal{M},\left\Vert g\right\Vert _{2}=1}\left\Vert
g\right\Vert _{\infty }\leq c_{\mathcal{M}}\sqrt{D}  \label{hyp_unit_env}
\end{equation}%
and%
\begin{equation*}
\left( \ln n\right) ^{2}\leq D\leq \frac{\sqrt{n}}{\ln n}\text{ .}
\end{equation*}%
Then, if $\tilde{s}_{0}\asymp \sqrt{D/n}$, 
\begin{equation*}
\left\vert \frac{\left\Vert \hat{g}-g^{0}\right\Vert -\tilde{s}_{0}}{\tilde{s%
}_{0}}\right\vert =O_{\mathbb{P}}\left( \frac{\sqrt{D}}{n^{1/4}}\vee \sqrt{%
\frac{\ln n}{D}}\right) =o_{\mathbb{P}}\left( 1\right) \text{ .}
\end{equation*}
\end{corollary}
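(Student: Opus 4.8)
The plan is to verify that the specific parameter choices in the corollary satisfy all the hypotheses of Theorem~\ref{theorem_gene}, and then simply read off the stated convergence rate from the conclusion of that theorem. The corollary is really an instantiation, so the work is in checking conditions rather than developing new machinery. First I would record the geometry of the model: since $\mathcal{G}=B_{\left(\mathcal{M},L_{\infty}\right)}\left(g^{0},1\right)$ is the unit sup-norm ball centered at $g^{0}$, Condition~\ref{cond_envelope} holds with $D\left(s\right)\leq\min\left\{1,c_{\mathcal{M}}\sqrt{D}\,s\right\}$. Indeed, for $g\in\mathcal{G}_{s}$ we have both $\left\Vert g-g^{0}\right\Vert_{\infty}\leq 1$ by definition of the ball and, by the envelope hypothesis \eqref{hyp_unit_env}, $\left\Vert g-g^{0}\right\Vert_{\infty}\leq c_{\mathcal{M}}\sqrt{D}\left\Vert g-g^{0}\right\Vert\leq c_{\mathcal{M}}\sqrt{D}\,s$ since $g-g^{0}\in\mathcal{M}$. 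Hence in the regime where $s$ is small enough the linear bound $D\left(s\right)=A_{\infty}s$ is available with $A_{\infty}\asymp c_{\mathcal{M}}\sqrt{D}$, matching the form required in Theorem~\ref{theorem_gene}.

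Next I would establish Condition~\ref{cond_expected_bound} with a linear complexity function. For the linear process $\mathbf{E}_{1}\left(s\right)=\mathbb{E}\max_{g\in\mathcal{G}_{s}}\left(P_{n}-P\right)\left(g-g^{0}\right)$, expanding $g-g^{0}=\sum_{k=1}^{D}\theta_{k}\varphi_{k}$ in the orthonormal dictionary and using Cauchy--Schwarz together with the constraint $\left\Vert g-g^{0}\right\Vert\leq s$, a standard computation bounds the expected supremum by $C\sqrt{D}\,s/\sqrt{n}$. This gives $\mathcal{J}_{1}\left(s\right)=c\sqrt{D}\,s$ and $m_{n}=\sqrt{n}$, so that $\Phi_{\mathcal{J}_{1}}\left(u\right)=\left[\mathcal{J}_{1}^{-1}\left(u\right)\right]^{2}\propto u^{2}$ is strictly convex as required. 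Consequently $A_{\mathcal{J}}\asymp\sqrt{D}$ after absorbing the constants $C_{A_{1},A_{2}}$ and $4\left(K\vee1\right)$ into $\mathcal{J}$, and the two key products become $A_{\mathcal{J}}A_{\infty}\asymp D$ and $A_{0}\asymp\sqrt{D}$ (reading off $\tilde{s}_{0}\asymp\sqrt{D/n}=A_{0}/\sqrt{n}$).

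Having matched the parameters, I would then check the two smallness conditions of Theorem~\ref{theorem_gene}. The requirement $\sqrt{\ln n}\ll A_{0}\ll\sqrt{n}$ becomes $\sqrt{\ln n}\ll\sqrt{D}\ll\sqrt{n}$, i.e. $\ln n\ll D\ll n$, which is comfortably implied by the corollary's hypothesis $\left(\ln n\right)^{2}\leq D\leq\sqrt{n}/\ln n$. The requirement $A_{\mathcal{J}}A_{\infty}\ll\sqrt{n}$ becomes $D\ll\sqrt{n}$, again guaranteed by $D\leq\sqrt{n}/\ln n$. With both conditions verified, Theorem~\ref{theorem_gene} applies and yields
\begin{equation*}
\left\vert\frac{\left\Vert\hat{g}-g^{0}\right\Vert-\tilde{s}_{0}}{\tilde{s}_{0}}\right\vert=O_{\mathbb{P}}\left(\frac{\sqrt{A_{\mathcal{J}}A_{\infty}}}{n^{1/4}}\vee\frac{\sqrt{\ln n}}{A_{0}}\right)=O_{\mathbb{P}}\left(\frac{\sqrt{D}}{n^{1/4}}\vee\sqrt{\frac{\ln n}{D}}\right)=o_{\mathbb{P}}\left(1\right)\text{ ,}
\end{equation*}
which is exactly the claimed rate.

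The main obstacle I expect is the careful bookkeeping in the second paragraph: one must verify that the envelope hypothesis \eqref{hyp_unit_env} is precisely what lets the linear expected-supremum bound be controlled by $\sqrt{D}\,s/\sqrt{n}$ uniformly over the ball, and that the strict-convexity condition on $\Phi_{\mathcal{J}_{1}}$ is genuinely satisfied by the linear $\mathcal{J}_{1}$. The constants $c_{\mathcal{M}}$, $C_{A_{1},A_{2}}$ and the factor $4\left(K\vee1\right)$ all enter the product $A_{\mathcal{J}}A_{\infty}$, but since the upper constraint $D\leq\sqrt{n}/\ln n$ forces $D\ll\sqrt{n}$ with room to spare, these constants are absorbed without disturbing the asymptotic rate. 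Everything else is a transparent substitution into the already-established Theorem~\ref{theorem_gene}.
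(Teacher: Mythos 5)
Your proposal is correct and follows essentially the same route as the paper's own proof: bound $\mathbf{E}_{1}\left(s\right)$ by $s\sqrt{D/n}$ via Cauchy--Schwarz and orthonormality (so $\mathcal{J}\left(s\right)\propto s\sqrt{D}$, $m_{n}=\sqrt{n}$), use \eqref{hyp_unit_env} to get the linear envelope $D\left(s\right)=c_{\mathcal{M}}s\sqrt{D}$, and then check that the constraints $\left(\ln n\right)^{2}\leq D\leq\sqrt{n}/\ln n$ imply the two smallness conditions of Theorem~\ref{theorem_gene}. The only differences are cosmetic: you additionally note the trivial bound $D\left(s\right)\leq 1$ and verify the strict convexity of $\Phi_{\mathcal{J}_{1}}$ explicitly, which the paper leaves implicit.
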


Note that Inequality (\ref{hyp_unit_env}) relating the sup-norm to the
quadratic norm of the functions in the linear span of the dictionary, is
classical in non-parametric estimation and is satisfied for the usual
functional bases such as the Fourier basis, wavelets or piecewise
polynomials over a regular partition (including histograms), see for
instance \cite{BarBirMassart:99}. In particular, Corollary \ref%
{corollary_linear_model} extends a concentration inequality recently
obtained in \cite{Saum:17}, for the excess risk of the ERM in the linear
aggregation problem when the dictionary at hand is the Fourier dictionary.

\section{Proofs\label{section_proofs}}

\subsection{Proofs related to Section \protect\ref{section_framework}\label%
{ssection_proof_framework}}

\begin{proof}[Proof of proposition \protect\ref{prop_quad_curv}]
Take $f=f_{g}\in \mathcal{F}$. Then, on the one hand,%
\begin{eqnarray}
\sigma ^{2}\left( f-f^{0}\right) &\leq &P\left( f-f^{0}\right) ^{2}  \notag
\\
&=&\mathbb{E}\left[ \left( \gamma \left( g\right) -\gamma \left(
g^{0}\right) \right) ^{2}\left( X,Y\right) \right]  \notag \\
&=&\mathbb{E}\left[ (g-g^{0})^{2}\left( X\right) (2Y-g\left( X\right)
-g_{0}\left( X\right) )^{2}\right]  \notag \\
&\leq &4\left( A_{1}+A_{2}\right) ^{2}\left\Vert g-g^{0}\right\Vert ^{2}%
\text{ .}  \label{majo_sigma}
\end{eqnarray}%
On the other hand, 
\begin{eqnarray}
P\left( f-f^{0}\right) &=&\mathbb{E}\left[ \left( \gamma \left( g\right)
-\gamma \left( g^{0}\right) \right) \left( X,Y\right) \right]  \notag \\
&=&\mathbb{E}\left[ (g^{0}-g)\left( X\right) (2Y-g\left( X\right)
-g_{0}\left( X\right) )\right]  \notag \\
&=&\mathbb{E}\left[ (g^{0}-g)\left( X\right) (2\left( Y-g^{0}\left( X\right)
\right) +g_{0}\left( X\right) -g\left( X\right) )\right]  \notag \\
&=&\left\Vert g-g^{0}\right\Vert ^{2}-2\mathbb{E}\left[ \left( Y-g^{0}\left(
X\right) \right) \left( g-g^{0}\right) \left( X\right) \right]  \notag \\
&=&\left\Vert g-g^{0}\right\Vert ^{2}-2\mathbb{E}\left[ \left( g_{\ast
}-g^{0}\right) \left( X\right) \left( g-g^{0}\right) \left( X\right) \right]
\notag \\
&\geq &\left\Vert g-g^{0}\right\Vert ^{2}\text{ .}  \label{mino_excess}
\end{eqnarray}%
The latter inequality, which corresponds to $\mathbb{E}\left[ \left( g_{\ast
}-g^{0}\right) \left( X\right) \left( g-g^{0}\right) \left( X\right) \right]
\leq 0$ comes from the fact that $\mathcal{G}$ is convex and so, $g^{0}$
being the projection of $g_{\ast }$ onto $\mathcal{G}$, the scalar product
in $L_{2}\left( P^{X}\right) $ between the functions $g_{\ast }-g^{0}$ and $%
g-g^{0}$ is nonpositive. Combining (\ref{majo_sigma}) and (\ref{mino_excess}%
) now gives the result.
\end{proof}

\bigskip

\begin{proof}[Proof of Lemma \protect\ref{lemma_second_order_lin}]
Inequality (\ref{lemma_second_order_lin}) derives from (\ref%
{second_order_lin_emp}) by taking expectation on both sides. Concerning the
proof of (\ref{lemma_second_order_lin}), it is easily seen that the function 
$s\mapsto \mathbf{\hat{E}}_{n,\ell }\left( s\right) $ is concave. Indeed,
take for $s_{i}\geq 0$, $i\in \left\{ 1,2\right\} $,%
\begin{equation*}
g_{n,s_{i}}=\arg \max_{g\in \mathcal{G}_{s_{i}}}\left\{ \left(
P_{n}-P\right) \left( \psi _{1}\cdot \left( g-g^{0}\right) \right) \right\} 
\text{ .}
\end{equation*}%
For any $\alpha ,\beta \in \left( 0,1\right) ,$ $\alpha +\beta =1$, if $%
g_{b}=\alpha g_{n,s_{1}}+\beta g_{n,s_{2}}$, then by the triangular
inequality, $g_{b}\in \mathcal{G}_{\alpha s_{1}+\beta s_{2}}$, which gives,%
\begin{equation*}
\mathbf{\hat{E}}_{n,\ell }\left( \alpha s_{1}+\beta s_{2}\right) \geq \left(
P_{n}-P\right) \left( \psi _{1}\cdot \left( g_{b}-g^{0}\right) \right)
=\alpha \mathbf{\hat{E}}_{n,\ell }\left( s_{1}\right) +\beta \mathbf{\hat{E}}%
_{n,\ell }\left( s_{2}\right) \text{ .}
\end{equation*}%
Now, from the concavity of $s\mapsto \mathbf{\hat{E}}_{n,\ell }\left(
s\right) $, we deduce that the function $s\mapsto s^{2}-\mathbf{\hat{E}}%
_{n,\ell }\left( s\right) $ is $1$-strongly convex, which implies (\ref%
{second_order_lin}).
\end{proof}

\bigskip

\subsection{Proofs related to Section \protect\ref{section_main_result}\label%
{ssection_proof_main_result}}

\begin{proof}[Proof of Theorem \protect\ref{theorem_gene}]
We prove the concentration of $\left\Vert \hat{g}-g^{0}\right\Vert $ at the
right of $\tilde{s}_{0}$ and arguments will be of the same type for the
deviations at the left. Take $t,\varepsilon >0,$ $J:=\left\lceil
K/\varepsilon \right\rceil $ and set, for any $j\in \left\{ 1,...,J\right\}
, $ the intervals%
\begin{equation*}
I_{j}:=\left( \left( j-1\right) \varepsilon +\delta +\tilde{s}_{0},\tilde{s}%
_{0}+\delta +j\varepsilon \right] \text{ .}
\end{equation*}%
We also set%
\begin{equation*}
z\left( t\right) =2C\tilde{s}_{0}\sqrt{\frac{t}{n}}+r_{0}\sqrt{\frac{t}{n}}+%
\frac{Kt}{n}\text{ .}
\end{equation*}%
It holds, for any $\delta >0$ (to be chosen later),%
\begin{eqnarray*}
&&\mathbb{P}\left( \tilde{s}_{0}+\delta <\hat{s}\leq K\right) \\
&\leq &\mathbb{P}\left( \exists s\in \left( \tilde{s}_{0}+\delta ,K\right) ,%
\text{ }s^{2}-\mathbf{\hat{E}}_{n}\left( s\right) \leq \tilde{s}_{0}^{2}-%
\mathbf{\hat{E}}_{n}\left( \tilde{s}_{0}\right) \right) \\
&\leq &\mathbb{P}\left( \exists s\in \left( \tilde{s}_{0}+\delta ,K\right) ,%
\text{ }s^{2}-\mathbf{\hat{E}}_{n}\left( s\right) \leq \tilde{s}_{0}^{2}-%
\mathbf{E}\left( \tilde{s}_{0}\right) +z\left( t\right) \right) +e^{-t}\text{
,}
\end{eqnarray*}%
where in the last inequality we used Lemma \ref{lemma_dev}. Furthermore, by
setting for all $j\in \left\{ 1,...,J\right\} $,%
\begin{equation*}
\mathbb{P}_{j}:=\mathbb{P}\left( \exists s\in I_{j},\text{ }s^{2}-\mathbf{%
\hat{E}}_{n}\left( s\right) \leq \tilde{s}_{0}^{2}-\mathbf{E}\left( \tilde{s}%
_{0}\right) +z\left( t\right) \right) \text{ ,}
\end{equation*}%
a union bound gives,%
\begin{equation*}
\mathbb{P}\left( \exists s\in \left( \tilde{s}_{0}+\delta ,K\right) ,\text{ }%
s^{2}-\mathbf{\hat{E}}_{n}\left( s\right) \leq \tilde{s}_{0}^{2}-\mathbf{E}%
\left( \tilde{s}_{0}\right) +z\left( t\right) \right) \leq \sum_{j=1}^{J}%
\mathbb{P}_{j}\text{ .}
\end{equation*}%
Now, for each index $j$ and for all $s\in I_{j}$, we have%
\begin{equation}
s^{2}-\mathbf{\hat{E}}_{n}\left( s\right) \geq \left( \left( j-1\right)
\varepsilon +\delta +\tilde{s}_{0}\right) ^{2}-\mathbf{\hat{E}}_{n}\left(
j\varepsilon +\delta +\tilde{s}_{0}\right) \text{ .}  \label{lower_s_Ij_0}
\end{equation}%
Furthermore, it holds for all $u>0$, with probability $1-e^{-u}$, 
\begin{eqnarray*}
\mathbf{\hat{E}}_{n}\left( j\varepsilon +\delta +\tilde{s}_{0}\right) &\leq &%
\mathbf{\mathbf{E}}\left( j\varepsilon +\delta +\tilde{s}_{0}\right)
+2C\left( \delta +j\varepsilon \right) \sqrt{\frac{u}{n}}+z\left( u\right) \\
&\leq &\mathbf{E}_{\ell }\left( j\varepsilon +\delta +\tilde{s}_{0}\right) +%
\mathbf{E}_{q}\left( j\varepsilon +\delta +\tilde{s}_{0}\right) \\
&&+2C\left( \delta +j\varepsilon \right) \sqrt{\frac{u}{n}}+z\left( u\right) 
\text{ ,}
\end{eqnarray*}%
where the first inequality comes from Lemma \ref{lemma_dev}. By Lemma \ref%
{lemma_second_order_lin}, we then have%
\begin{eqnarray*}
&&\left( \left( j-1\right) \varepsilon +\delta +\tilde{s}_{0}\right) ^{2}-%
\mathbf{E}_{\ell }\left( j\varepsilon +\delta +\tilde{s}_{0}\right) -\mathbf{%
E}_{q}\left( j\varepsilon +\delta +\tilde{s}_{0}\right) \\
&\geq &\tilde{s}_{0}^{2}-\mathbf{E}_{\ell }\left( \tilde{s}_{0}\right)
+\left( \delta +j\varepsilon \right) ^{2}-D\left( j\varepsilon +\delta +%
\tilde{s}_{0}\right) \mathcal{J}\left( j\varepsilon +\delta +\tilde{s}%
_{0}\right) /m_{n} \\
&&+\left( \left( j-1\right) \varepsilon +\delta +\tilde{s}_{0}\right)
^{2}-\left( j\varepsilon +\delta +\tilde{s}_{0}\right) ^{2} \\
&\geq &\tilde{s}_{0}^{2}-\mathbf{E}\left( \tilde{s}_{0}\right) +\left(
\delta +j\varepsilon \right) ^{2}-D\left( j\varepsilon +\delta +\tilde{s}%
_{0}\right) \mathcal{J}\left( j\varepsilon +\delta +\tilde{s}_{0}\right)
/m_{n} \\
&&-2\varepsilon \left( \tilde{s}_{0}+\delta +j\varepsilon \right)
+\varepsilon ^{2}\text{ .}
\end{eqnarray*}%
Putting the previous estimates in (\ref{lower_s_Ij_0}), we get, for all $%
s\in I_{j}$,%
\begin{eqnarray*}
s^{2}-\mathbf{\hat{E}}_{n}\left( s\right) &\geq &\tilde{s}_{0}^{2}-\mathbf{E}%
\left( \tilde{s}_{0}\right) +\left( \delta +j\varepsilon \right)
^{2}-D\left( j\varepsilon +\delta +\tilde{s}_{0}\right) \mathcal{J}\left(
j\varepsilon +\delta +\tilde{s}_{0}\right) /m_{n} \\
&&-2\left( C\sqrt{\frac{u}{n}}+\varepsilon \right) \left( \delta
+j\varepsilon \right) -2\varepsilon \tilde{s}_{0}+\varepsilon ^{2}-z\left(
u\right) \\
&\geq &\tilde{s}_{0}^{2}-\mathbf{E}\left( \tilde{s}_{0}\right) +\frac{1}{2}%
\left( \delta +j\varepsilon \right) ^{2}-D\left( j\varepsilon +\delta +%
\tilde{s}_{0}\right) \mathcal{J}\left( j\varepsilon +\delta +\tilde{s}%
_{0}\right) /m_{n} \\
&&-2\left( C\sqrt{\frac{u}{n}}+\varepsilon \right) ^{2}-2\varepsilon \tilde{s%
}_{0}+\varepsilon ^{2}-z\left( u\right) \text{ ,}
\end{eqnarray*}%
with probability $1-e^{-u}$. We require that 
\begin{equation*}
\left( \delta +j\varepsilon \right) ^{2}\geq 4D\left( j\varepsilon +\delta +%
\tilde{s}_{0}\right) \mathcal{J}\left( j\varepsilon +\delta +\tilde{s}%
_{0}\right) /m_{n}\text{ .}
\end{equation*}%
Using the assumptions, it is equivalent to require%
\begin{equation*}
\left( \delta +j\varepsilon \right) ^{2}\geq \frac{4\sqrt{A_{\mathcal{J}%
}A_{\infty }}\left( j\varepsilon +\delta +\tilde{s}_{0}\right) ^{2}}{\sqrt{n}%
}\text{ .}
\end{equation*}%
Whenever $A_{\mathcal{J}}A_{\infty }\leq \sqrt{n}$, the last display is true
if $\delta \sim A_{\mathcal{J}}A_{\infty }\tilde{s}_{0}/n^{1/4}$. We also
require that%
\begin{equation*}
\left( \delta +j\varepsilon \right) ^{2}\geq 4\left( 2\left( C\sqrt{\frac{u}{%
n}}+\varepsilon \right) ^{2}+2\varepsilon \tilde{s}_{0}-\varepsilon
^{2}+z\left( u\right) +z\left( t\right) \right) \text{ .}
\end{equation*}%
To finish the proof, we fix $\varepsilon =1/\sqrt{n}$ and $u=t+\ln \left( 1+K%
\sqrt{n}\right) $. In particular, $J\leq 1+K\sqrt{n}$ and 
\begin{equation*}
\sum_{j=1}^{J}\mathbb{P}_{j}\leq e^{-t}\text{ .}
\end{equation*}%
Our conditions on $\delta =\delta \left( t\right) $ become, for a constant $%
c_{0}$ only depending on $C$ and $K$,%
\begin{equation*}
\delta \left( t\right) \geq \frac{\sqrt{A_{\mathcal{J}}A_{\infty }}\tilde{s}%
_{0}}{n^{1/4}}\vee c_{0}\left( \sqrt{\frac{t+\ln \left( 1+K\sqrt{n}\right) }{%
n}}+\frac{t+\ln \left( 1+K\sqrt{n}\right) }{n}\right) \text{ .}
\end{equation*}%
since $\tilde{s}_{0}\vee r_{0}\ll 1$.
\end{proof}

\begin{proof}[Proof of Corollary \protect\ref{corollary_linear_model}]
Under Assumption (\ref{hyp_unit_env}), we have%
\begin{equation*}
\mathbf{E}_{1}\left( s\right) \leq \mathbb{E}^{1/2}\left[ \left( \max_{g\in 
\mathcal{G}_{s}}\left\{ \left( P_{n}-P\right) \left( g-g^{0}\right) \right\}
\right) ^{2}\right] \leq s\sqrt{\frac{D}{n}}\text{ ,}
\end{equation*}%
where we used two times Cauchy-Schwarz inequality. Hence, $m_{n}=\sqrt{n}$
and $\mathcal{J}\left( s\right) =s\sqrt{D}$ are convenient. Furthermore, by
Assumption (\ref{hyp_unit_env}), we have%
\begin{equation*}
\sup_{g\in \mathcal{G}_{s}}\left\Vert g-g^{0}\right\Vert _{\infty }\leq c_{%
\mathcal{M}}s\sqrt{D}\text{ .}
\end{equation*}%
We can thus apply Theorem \ref{theorem_gene} with $D\left( s\right) =c_{%
\mathcal{M}}s\sqrt{D}$. Consequently, condition $\sqrt{\ln n}\ll A_{0}\ll 
\sqrt{n}$ turns into 
\begin{equation*}
\ln n\ll D\ll n
\end{equation*}%
and condition $A_{\mathcal{J}}A_{\infty }\ll \sqrt{n}$ is satisfied whenever 
$D\ll \sqrt{n}$.
\end{proof}

In the following theorem (see Theorem 8.1 in \cite{vandeGeerWain:16}), the
right-tail inequalities are direct applications of Bousquet \cite%
{Bousquet:02} and the left-tail inequalities are deduced from Klein and Rio 
\cite{Klein_Rio:05}.

\begin{theorem}
\label{theorem_bou_klein_rio}If Conditions \ref{cond_reg_bounded} and \ref%
{cond_model_bounded} are satisfied and we set%
\begin{equation*}
K:=\sup_{f\in \mathcal{F}_{s}}\left\Vert f-f^{0}\right\Vert _{\infty }\text{
\ \ , \ }\sigma _{s}:=\sup_{f\in \mathcal{F}_{s}}\sigma \left(
f-f^{0}\right) \text{ ,}
\end{equation*}%
then it holds%
\begin{eqnarray*}
\mathbb{P}\left( \mathbf{\hat{E}}_{n}\left( s\right) \geq \mathbf{E}\left(
s\right) +\sqrt{\left( 8K\mathbf{E}\left( s\right) +2\sigma _{s}^{2}\right) 
\frac{t}{n}}+\frac{2Kt}{3n}\right) &\leq &e^{-t}\text{ ,} \\
\mathbb{P}\left( \mathbf{\hat{E}}_{n}\left( s\right) \leq \mathbf{E}\left(
s\right) -\sqrt{\left( 8K\mathbf{E}\left( s\right) +2\sigma _{s}^{2}\right) 
\frac{t}{n}}-\frac{Kt}{n}\right) &\leq &e^{-t}\text{ ,}
\end{eqnarray*}
\end{theorem}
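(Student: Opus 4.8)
The plan is to recognise $\mathbf{\hat{E}}_{n}\left( s\right)$ as the supremum of a centred empirical process and to feed it into the two Talagrand-type inequalities recalled just above the statement; there is no analytic content beyond checking that the two parameters these inequalities require are exactly $K$ and $\sigma _{s}$. Writing $\mathcal{H}_{s}:=\left\{ f^{0}-f\,;\,f\in \mathcal{F}_{s}\right\}$, one has
\begin{equation*}
\mathbf{\hat{E}}_{n}\left( s\right) =\sup_{h\in \mathcal{H}_{s}}\left( P_{n}-P\right) \left( h\right) \text{ ,}\qquad \mathbb{E}\left[ \mathbf{\hat{E}}_{n}\left( s\right) \right] =\mathbf{E}\left( s\right) \text{ ,}
\end{equation*}
which is precisely the object to which Bousquet \cite{Bousquet:02} (upper tail) and Klein and Rio \cite{Klein_Rio:05} (lower tail) apply.

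Next I would read off the envelope and the weak variance of $\mathcal{H}_{s}$. Conditions \ref{cond_reg_bounded} and \ref{cond_model_bounded} make every $f\in \mathcal{F}$ bounded, so each $h=f^{0}-f$ with $f\in \mathcal{F}_{s}$ satisfies $\left\Vert h\right\Vert _{\infty }=\left\Vert f-f^{0}\right\Vert _{\infty }\leq K$ by the very definition of $K$ here; consequently the centred summands $h-Ph$ take values in an interval of length at most $2K$, and it is this value $b:=2K$ that plays the role of the boundedness constant in Bousquet's inequality. For the variance, the individual variance of the summand $h=f^{0}-f$ is $\sigma ^{2}\left( f^{0}-f\right) =\sigma ^{2}\left( f-f^{0}\right) \leq \sigma _{s}^{2}$ by definition of $\sigma _{s}$, so $\sigma _{s}^{2}$ is an admissible uniform bound on the variances.

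With $b=2K$ and weak variance $\sigma _{s}^{2}$, the right-tail inequality is a single application of Bousquet's bound to $n\mathbf{\hat{E}}_{n}\left( s\right) =\sup_{h\in \mathcal{H}_{s}}\sum_{i=1}^{n}\left( h\left( X_{i},Y_{i}\right) -Ph\right)$: the deviation is controlled by $\sqrt{2t\left( n\sigma _{s}^{2}+2b\,n\mathbf{E}\left( s\right) \right) }+bt/3$, and dividing through by $n$ turns the variance proxy $\sigma _{s}^{2}+2b\,\mathbf{E}\left( s\right) =\sigma _{s}^{2}+4K\mathbf{E}\left( s\right)$ into $2\sigma _{s}^{2}+8K\mathbf{E}\left( s\right)$ under the root, and the remainder $bt/3$ into $2Kt/(3n)$, which is exactly the first displayed inequality. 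The left-tail inequality is obtained identically, replacing Bousquet by the lower-deviation inequality of Klein and Rio, which shares the same main term $\sqrt{\left( 8K\mathbf{E}\left( s\right) +2\sigma _{s}^{2}\right) t/n}$ but carries a different linear-in-$t$ coefficient, producing the remainder $Kt/n$ in place of $2Kt/(3n)$.

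The only point demanding any care — and the reason the statement is phrased as a transcription of Theorem 8.1 of \cite{vandeGeerWain:16} rather than re-proved from scratch — is the measurability of the supremum and the passage to a countable index set on which the cited inequalities are literally stated. This is dealt with by the standing uniform boundedness of $\mathcal{G}$ (Condition \ref{cond_model_bounded}) together with the separability of $\mathcal{G}$ in $L_{2}\left( P^{X}\right)$, which lets one restrict $\mathcal{F}_{s}$ to a countable dense subfamily without altering $\mathbf{\hat{E}}_{n}\left( s\right)$ or $\mathbf{E}\left( s\right)$. Everything else is the bookkeeping already indicated: keeping the envelope at $b=2K$ after centring is what makes the constants $8K$, $2Kt/(3n)$ and $Kt/n$ come out exactly as stated.
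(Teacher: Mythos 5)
Your proposal is correct and takes essentially the same route as the paper: the paper gives no independent proof of this statement, presenting it as Theorem 8.1 of \cite{vandeGeerWain:16}, i.e.\ as a direct application of Bousquet's inequality \cite{Bousquet:02} for the right tail and of Klein--Rio's inequality \cite{Klein_Rio:05} for the left tail. Your derivation simply fills in that application---the centred class $\left\{ f^{0}-f\,;\,f\in \mathcal{F}_{s}\right\} $ with sup-norm bound $2K$ after centring and variance bound $\sigma _{s}^{2}$---and the resulting constants $\sqrt{\left( 8K\mathbf{E}\left( s\right) +2\sigma _{s}^{2}\right) t/n}$, $2Kt/\left( 3n\right) $ and $Kt/n$ match the statement.
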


Using Proposition \ref{prop_quad_curv} and Conditions \ref{cond_reg_bounded}%
, \ref{cond_model_bounded}, \ref{cond_expected_bound}, we can simplify the
bounds given in Theorem \ref{theorem_bou_klein_rio} as follows.

\begin{lemma}
\label{lemma_dev}If Conditions \ref{cond_reg_bounded}, \ref%
{cond_model_bounded} and \ref{cond_expected_bound} are satisfied, then with
the same notations as in Theorem \ref{theorem_bou_klein_rio} and also%
\begin{equation*}
r_{0}^{2}:=2C^{2}\Phi ^{\ast }\left( \frac{8K}{m_{n}C^{2}}\right)
\end{equation*}%
we have%
\begin{eqnarray*}
\mathbb{P}\left( \mathbf{\hat{E}}_{n}\left( s\right) \geq \mathbf{E}\left(
s\right) +2Cs\sqrt{\frac{t}{n}}+r_{0}\sqrt{\frac{t}{n}}+\frac{2Kt}{3n}%
\right) &\leq &e^{-t}\text{ ,} \\
\mathbb{P}\left( \mathbf{\hat{E}}_{n,\ell }\left( s\right) \leq \mathbf{E}%
\left( s\right) -2Cs\sqrt{\frac{t}{n}}-r_{0}\sqrt{\frac{t}{n}}-\frac{Kt}{n}%
\right) &\leq &e^{-t}\text{ ,}
\end{eqnarray*}
\end{lemma}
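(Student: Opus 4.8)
The plan is to deduce both inequalities directly from Theorem~\ref{theorem_bou_klein_rio} by simplifying the variance factor $8K\mathbf{E}(s)+2\sigma_s^2$ that sits under the square root. The target reduction is the pointwise bound
\[
8K\mathbf{E}(s)+2\sigma_s^2\leq\left(2Cs+r_0\right)^2 ,
\]
since this gives $\sqrt{\left(8K\mathbf{E}(s)+2\sigma_s^2\right)t/n}\leq 2Cs\sqrt{t/n}+r_0\sqrt{t/n}$, while the linear-in-$t$ remainders $2Kt/(3n)$ and $Kt/n$ are copied verbatim from Theorem~\ref{theorem_bou_klein_rio}. So everything rests on two elementary estimates, $2\sigma_s^2\leq 2C^2s^2$ and $8K\mathbf{E}(s)\leq C^2s^2+r_0^2/2$: their sum is at most $3C^2s^2+r_0^2/2\leq 4C^2s^2+r_0^2\leq\left(2Cs+r_0\right)^2$.

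The first estimate is immediate from the quadratic curvature relation. For $f\in\mathcal{F}_s$ one has $P(f-f^0)\leq s^2$ by definition of $\mathcal{F}_s$, so Proposition~\ref{prop_quad_curv} yields $\sigma^2(f-f^0)\leq C^2P(f-f^0)\leq C^2s^2$; taking the supremum over $\mathcal{F}_s$ gives $\sigma_s\leq Cs$, whence $2\sigma_s^2\leq 2C^2s^2$.

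The second estimate is the substance of the lemma and is where the convex conjugate $\Phi^\ast$ appears. By Condition~\ref{cond_expected_bound} combined with the symmetrization and contraction bound recorded just before the theorem, $\mathbf{E}(s)\leq\mathcal{J}(s)/m_n$. I would then invoke the Fenchel--Young inequality $xy\leq\Phi(x)+\Phi^\ast(y)$ for the conjugate pair $(\Phi,\Phi^\ast)$ associated with the modulus $\mathcal{J}$, i.e. with $\Phi$ strictly convex and satisfying $\Phi(\mathcal{J}(s))=\left[\mathcal{J}^{-1}(\mathcal{J}(s))\right]^2=s^2$. Choosing $x=\mathcal{J}(s)$ and $y=8K/(m_nC^2)$ gives $8K\mathcal{J}(s)/(m_nC^2)\leq s^2+\Phi^\ast\!\left(8K/(m_nC^2)\right)$; multiplying by $C^2$ and substituting $\mathbf{E}(s)\leq\mathcal{J}(s)/m_n$ produces exactly $8K\mathbf{E}(s)\leq C^2s^2+C^2\Phi^\ast\!\left(8K/(m_nC^2)\right)=C^2s^2+r_0^2/2$, by the definition of $r_0^2$.

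Assembling the two estimates and feeding the resulting variance bound into Bousquet's inequality gives the right-tail statement. For the left tail I would run the identical simplification on the Klein--Rio inequality of Theorem~\ref{theorem_bou_klein_rio}, applied to the linear process $\mathbf{\hat{E}}_{n,\ell}$ (which is the object used downstream through the second-order margin relation of Lemma~\ref{lemma_second_order_lin}); its envelope is again controlled by $K$ and its weak variance is bounded by the same curvature-plus-contraction argument, so the same calibration applies. The hard part is precisely this calibration in the Fenchel--Young step: the dual argument $y=8K/(m_nC^2)$ must be chosen so that the $s$-dependent overflow collapses into the $C^2s^2$ term, which then merges into the $2Cs\sqrt{t/n}$ fluctuation, while the $s$-free remainder reproduces $r_0^2/2$ with the correct constant $C$; every other inequality in the chain is routine.
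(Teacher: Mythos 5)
Your proposal follows essentially the same route as the paper's proof: bound $2\sigma_s^2\leq 2C^2s^2$ via Proposition \ref{prop_quad_curv}, control $8K\mathbf{E}\left(s\right)$ through the bound $\mathcal{J}\left(s\right)/m_n$ and the Fenchel--Young inequality for the pair $\left(\Phi_{\mathcal{J}},\Phi_{\mathcal{J}}^{\ast}\right)$ evaluated at $8K/\left(m_nC^2\right)$, then feed the simplified variance into the Bousquet and Klein--Rio bounds of Theorem \ref{theorem_bou_klein_rio}. The only discrepancy is that the bounds recorded before Theorem \ref{theorem_gene} control $\mathbf{E}_{\ell}$ and $\mathbf{E}_{q}$ separately, so for the full process one only gets $\mathbf{E}\left(s\right)\leq 2\mathcal{J}\left(s\right)/m_n$ (the paper accordingly obtains $8K\mathbf{E}\left(s\right)\leq 2C^2s^2+r_0^2$ rather than your $C^2s^2+r_0^2/2$); this missing factor $2$ is harmless because your final slack $4C^2s^2+r_0^2\leq\left(2Cs+r_0\right)^2$ absorbs it, leaving the conclusion unchanged.
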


\begin{proof}
We have $\sigma _{s}^{2}\leq C^{2}s^{2}$, where the constant $C$ is defined
in proposition \ref{prop_quad_curv}. Furthermore, using the fact that $%
uv\leq \Phi _{\mathcal{J}}\left( u\right) +\Phi _{\mathcal{J}}^{\ast }\left(
v\right) $ for any $u,v>0$, we get%
\begin{equation*}
8K\mathbf{E}\left( s\right) \leq \frac{16K}{m_{n}}\mathcal{J}\left( s\right)
\leq 2C^{2}s^{2}+2C^{2}\Phi _{\mathcal{J}}^{\ast }\left( \frac{8K}{m_{n}C^{2}%
}\right) =2C^{2}s^{2}+r_{0}^{2}\text{ .}
\end{equation*}%
The conclusion is then easy to obtain by using $\sqrt{a+b}\leq \sqrt{a}+%
\sqrt{b}$.
\end{proof}

\bibliographystyle{alpha}
\bibliography{chern,Slope_heuristics_regression_13}

\end{document}